\DeclareMathAlphabet{\mathpzc}{OT1}{pzc}{m}{it}
\theoremstyle{plain} 
\newtheorem{theorem}{Theorem}[section] 
\newtheorem{lemma}[theorem]{Lemma}
\newtheorem{proposition}[theorem]{Proposition}
\theoremstyle{definition}
\theoremstyle{remark} 
\newtheorem*{remark}{Remark}
\def\timenow{\@hour=\time \divide\@hour by 60
\number\@hour:
  \multiply\@hour by 60 \@minute=\time
  \global\advance\@minute by -\@hour
  \ifnum\@minute<10 0\number\@minute\else
  \number\@minute\fi}
\def\ctimenow{\hfil{\tt \jobname.tex, \today~Time: \timenow }\hfil}
      \let\@oddfoot\ctimenow\let\@evenfoot\ctimenow
\begin{document}

\begin{center}{\bf \Large
Comparison of the sets of attractors for systems of contractions and weak contractions
}
\end{center}
\smallskip
\begin{center}
By
\end{center}
\smallskip
\begin{center} Pawe\l{} Klinga and Adam Kwela
\end{center}

\begin{abstract}
For $n,d\in\mathbb{N}$ we consider the families:
\begin{itemize} 
\item $L_n^d$ of attractors for iterated function systems (IFS) consisting of $n$ contractions acting on $[0,1]^d$,
\item $wL_n^d$ of attractors for weak iterated function systems (wIFS) consisting of $n$ weak contractions acting on $[0,1]^d$.
\end{itemize}
We study closures of the above families as subsets of the hyperspace $K([0,1]^d)$ of all compact subsets of $[0,1]^d$ equipped in the Hausdorff metric. In particular, we show that $\overline{L_n^d}=\overline{wL_n^d}$ and $L_{n+1}^d\setminus\overline{L_n^d}\neq\emptyset$, for all $n,d\in\mathbb{N}$. What is more, we construct a compact set belonging to $\overline{L_2^d}$ which is not an attractor for any wIFS. We present a diagram summarizing our considerations.
\let\thefootnote\relax\footnote{2010 Mathematics Subject Classification: Primary: 28A80. Secondary: 26A18.
}
\let\thefootnote\relax\footnote{Key words and phrases: attractors, iterated function systems, weak iterated function systems}
\end{abstract}

\section{Introduction}

In this paper we are interested in families of attractors for certain iterated function systems as subsets of the the space $K([0,1]^d)$ of compact subsets of $[0,1]^d$ equipped with the Hausdorff metric. This subject has been studied among others in \cite{DS1}, \cite{DS2}, \cite{DS3}, \cite{my} and \cite{SS}.
 
In \cite[Theorem 3.9]{DS1} (see also \cite{DS2}) E. D'Aniello and T.H. Steele proved that the family IFS$^d$ of attractors for iterated function systems acting on a metric space $[0,1]^d,$ $d\in\mathbb{N},$ is a meager $F_\sigma$ subset of the space $K([0,1]^d)$ (similar considerations can be found in \cite[Theorem 5]{SS}). In \cite[Theorem 4.1]{DS3} and \cite[Theorem 4.1]{my}, independently, it is shown that a broader family, the set wIFS$^d$ of attractors for weak iterated function systems (which are systems of weak contractions, i.e., functions $f$ satisfying $d(f(x),f(y))<d(x,y)$ for all $x\neq y$) acting on $[0,1]^d,$ $d\in\mathbb{N},$ is also meager. Actually the proofs show something more: for each $n\in\mathbb{N}$ the set $wL_n^d$ of all attractors for weak iterated function systems consisting of $n$ functions, is nowhere dense in $K([0,1]^d)$. As a consequence we get that $\overline{wL_n^d}$ is a closed nowhere dense subset of $K([0,1]^d)$. 

The main motivation for this paper is to capture in a precise mathematical way the differences and similarities between families IFS$^d$ and wIFS$^d$. We are interested in the question whether $wL_n^d$ and $L_n^d$, which is the family of all attractors for traditional IFSs consisting of $n$ functions, maintain the same topological properties. 

Let ``$A\rightarrow B$" denote ``$A\subsetneq B$". The following diagram summarizes all our results.

\begin{figure}[H]
	\begin{tikzcd}
		\genfrac{}{}{0pt}{}{\displaystyle L_1^d=wL_1^d=\overline{L_1^d}=\overline{wL_1^d}}{\displaystyle =\{\{x\}:x\in[0,1]^d\}} \arrow[d]
		&
		\mbox{\ }
		&	
		\mbox{\ }
		\\
		L_2^d \arrow{d}\arrow{r}
		&
		wL_2^d \arrow{d}\arrow{r}
		&
		\overline{L_2^d}=\overline{wL_2^d} \arrow{d}
		\\
		L_3^d \arrow{d}\arrow{r}
		&
		wL_3^d \arrow{d}\arrow{r}
		&
		\overline{L_3^d}=\overline{wL_3^d} \arrow{d}
		\\
		\vdots \arrow{d}
		&
		\vdots \arrow{d}
		&
		\vdots \arrow{d}
		\\
		L_n^d \arrow{d}\arrow{r}
		&
		wL_n^d \arrow{d}\arrow{r}
		&
		\overline{L_n^d}=\overline{wL_n^d} \arrow{d}
		\\
		\vdots \arrow{d}
		&
		\vdots \arrow{d}
		&
		\vdots \arrow{d}
		\\
		IFS^d \arrow{r}
		&
		wIFS^d \arrow{r}
		&
		\bigcup_{n\in\mathbb{N}}\overline{L_n^d} \arrow{d}
		\\
		\mbox{\ }
		&
		\mbox{\ }
		&
		\overline{IFS^d}=K([0,1]^d)
	\end{tikzcd}
\end{figure}

In the above diagram, all inclusions follow from Remark \ref{r} and Theorem \ref{thmLnW}. All inequalities follow from Theorems \ref{(n+1)-n}, \ref{2-wIFS} and Proposition \ref{p}.

In Section 2 we provide some background needed in our considerations. All main results are proved in Section 3. 

\section{Preliminaries}

By $K(X)$ we denote the collection of all compact subsets of a compact metric space $(X,d)$. Throughout the paper we will assume that $X=[0,1]^d$ for some $d\in\mathbb{N}$.

In \cite{DS2} D'Aniello and Steele proved the following lemma which we will use in the proof of Theorem \ref{thmLnW}.
\begin{lemma}\label{DSlemma42}\cite{DS2}
	Let $(X,d)$ be a compact metric space. If $(E_k)_{k\in\mathbb{N}}$ is a sequence in $K(X)$ such that $\lim_{k\to\infty}d_H(E_k,E) = 0$ and $(S_k)_{ k\in\mathbb{N} }$ is a sequence in $C(X,X)$ converging uniformly to $S$, then $\lim_{k\to\infty} d_H( S_k[E_k], S[E] ) = 0 $.
\end{lemma}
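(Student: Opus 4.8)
The plan is to bound $d_H(S_k[E_k],S[E])$ by the triangle inequality for the Hausdorff metric, writing
\[
d_H(S_k[E_k],S[E]) \le d_H(S_k[E_k],S[E_k]) + d_H(S[E_k],S[E]),
\]
and then to show that each of the two summands tends to $0$. Note first that all four sets appearing here are compact, being continuous images of compact sets, so each Hausdorff distance is well defined. I would also record the standing observation that $S$, as a uniform limit of maps in $C(X,X)$, is continuous, and therefore — since $X$ is compact — uniformly continuous; this is the only place compactness of $X$ is really used.

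For the first summand, put $\rho_k := \sup_{y\in X} d(S_k(y),S(y))$, so that $\rho_k\to 0$ by hypothesis. For each $x\in E_k$, the point $S(x)$ lies in $S[E_k]$ and satisfies $d(S_k(x),S(x))\le\rho_k$; symmetrically, $S_k(x)\in S_k[E_k]$ and $d(S(x),S_k(x))\le\rho_k$. Taking suprema over $x\in E_k$ in both directions yields $d_H(S_k[E_k],S[E_k])\le\rho_k\to 0$.

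For the second summand, fix $\varepsilon>0$ and use uniform continuity of $S$ to choose $\delta>0$ such that $d(x,y)<\delta$ implies $d(S(x),S(y))<\varepsilon$. Since $d_H(E_k,E)\to 0$, for all sufficiently large $k$ we have $d_H(E_k,E)<\delta$; then every $x\in E_k$ admits some $y\in E$ with $d(x,y)<\delta$, hence $d(S(x),S(y))<\varepsilon$, so $S(x)$ is within $\varepsilon$ of $S[E]$, and the symmetric assertion holds as well. Therefore $d_H(S[E_k],S[E])\le\varepsilon$ for all large $k$, i.e. $d_H(S[E_k],S[E])\to 0$. Combining the two estimates gives $d_H(S_k[E_k],S[E])\to 0$, as desired.

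As for the main obstacle: this is essentially a routine $\varepsilon$--$\delta$ argument and I do not expect a genuine difficulty. The one point deserving care is that one must invoke \emph{uniform} continuity of $S$, not merely continuity — since the sets $E_k$ wander through $X$, a single modulus $\delta$ valid on all of $X$ is needed — and one must remember to check both directions of the Hausdorff supremum at each step.
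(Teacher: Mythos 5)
Your proof is correct and complete: the triangle-inequality decomposition, the uniform-convergence bound on $d_H(S_k[E_k],S[E_k])$, and the uniform-continuity argument for $d_H(S[E_k],S[E])$ are all sound, and you correctly identify that uniform (not merely pointwise) continuity of $S$ is the key point. Note that the paper itself gives no proof of this lemma --- it is quoted from \cite{DS2} --- so there is nothing to compare against; your argument is the standard one and would serve as a self-contained proof.
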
 

A function $f\colon X \to X$ is called a contraction on $X$ if one can find a constant $L\in [0,1)$ such that $d(f(x),f(y))\leqslant L\cdot d(x,y)$ for every $x,y\in X$. 

Every finite set $\{ s_1, \dots, s_k \}$ of contractions acting on $X$ will be called an iterated function system (IFS in short). By $\mathcal{S}\colon K(X) \to K(X)$ we denote the Hutchinson operator for $\{ s_1, \dots, s_k \}$, i.e.
$$\mathcal{S}(A) = \bigcup_{i=1}^k s_i[A].$$
If $(X,d)$ is a complete metric space, then $(K(X),d_H)$, where $d_H$ is the Hausdorff metric, is complete as well. Applying the Banach fixed-point theorem, there is a unique attractor for the iterated function system $\{ s_1, \dots, s_k \}$, i.e., a unique compact set $A\in K(X)$ such that $\mathcal{S}(A) = A$. The form of the Hutchinson operator imposes that attractors are self-similar (at least in some sense), so they are often used to describe fractals.

By IFS$^d$ we will denote the set of all attractors for iterated function systems acting on $[0,1]^d$, i.e., a compact set $A\subseteq[0,1]^d$ is in IFS$^d$ if there is an iterated function system $\{ s_1, \dots, s_k \}$ acting on $[0,1]^d$ such that $\mathcal{S}(A) = A$. It is clear that $IFS^d=\bigcup_{n\in\mathbb{N}}L_n^d$, where
$$L_n^d=\left\{A\in K([0,1]^d):\exists_{\{f_1,\ldots,f_n\}\text{- IFS}}A=\bigcup_{i\leq n}f_i[A]\right\}.$$

A function $f:X\to X$ acting on a compact metric space $(X,d)$ is called a weak contraction whenever $d(s_i(x),s_i(y))<d(x,y)$ for all $x,y\in X$, $x\neq y$. Clearly, every contraction is a weak contraction. A finite set $\{s_1,\dots,s_k\}$ of weak contractions acting on $X$ is called a weak iterated function system (wIFS in short). By \cite{E}, if $X$ is compact then, similarly to the case of traditional iterated function systems, every weak iterated function system has a unique attractor. A compact set $A\subseteq X$ satisfying $A=\bigcup_{i=1}^k s_i[A]$ will be called an attractor for $\{s_1,\dots,s_k\}$. Obviously, each attractor of some IFS is an attractor of some wIFS. The reverse inclusion is not true by \cite{NFM}.

Similarly as before, for $n\in\mathbb{N}$ we will denote 
$$wL_n^d=\left\{A\in K([0,1]^d):\exists_{\{f_1,\ldots,f_n\}\text{- wIFS}}A=\bigcup_{i\leq n}f_i[A]\right\}.$$
Then $wIFS^d=\bigcup_{n\in\mathbb{N}}wL_n^d$ is the family of all attractors for weak iterated function systems acting on $[0,1]^d$

In the above definitions of $L_n^d$ and $wL_n^d$ we do not require that functions $f_1,\ldots,f_n$ are pairwise distinct, so $L_n^d\subseteq L_{n+1}^d$ and $wL_n^d\subseteq wL_{n+1}^d$ for all $n\in\mathbb{N}$.

\begin{remark}
\label{r}
Fix any $n,d\in\mathbb{N}$. It is clear from the definitions that:
\begin{itemize}
\item $L^d_{1}=wL^d_{1}=\overline{L^d_{1}}=\overline{wL^d_{1}}$ is the set of all singletons of $[0,1]^d$,
\item $L^d_{n}\subseteq wL^d_{n}\subseteq\overline{wL^d_{n}}$,
\item $L^d_{n}\subseteq L^d_{n+1}\subseteq \bigcup_k L^d_{k}=IFS^d$,
\item $wL^d_{n}\subseteq wL^d_{n+1}\subseteq \bigcup_k wL^d_{k}=wIFS^d$,
\item $IFS^d\subseteq wIFS^d\subseteq\bigcup_k \overline{wL^d_{k}}$,
\item $IFS^d$ is dense in $K([0,1]^d)$ as it contains all finite subsets of $[0,1]^d$.
\end{itemize}
\end{remark}

Although all our results apply to spaces $K([0,1]^d)$ for arbitrary $d\in\mathbb{N}$, in the proof we will mostly work in the one-dimensional case. The following lemma justifies this approach.

\begin{lemma}
\label{l}
For each $n,d\in\mathbb{N}$ the following hold:
\begin{itemize}
\item $A\in L_n^1$ if and only if $A\times\{0\}^{d-1}\in L_n^d$,
\item $A\in wL_n^1$ if and only if $A\times\{0\}^{d-1}\in wL_n^d$,
\item if $A\in \overline{L_n^1}$ then $A\times\{0\}^{d-1}\in \overline{L_n^d}$.
\end{itemize}
\end{lemma}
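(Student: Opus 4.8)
The plan is to prove all three items by pushing an IFS (resp.\ wIFS) back and forth between $[0,1]^1$ and $[0,1]^d$ along the isometric embedding $\iota\colon [0,1]\to[0,1]^d$, $\iota(t)=(t,0,\dots,0)$, whose image is the face $F=[0,1]\times\{0\}^{d-1}$. Write $\pi\colon [0,1]^d\to[0,1]$ for the projection onto the first coordinate; note $\pi$ is $1$-Lipschitz and $\pi\circ\iota=\mathrm{id}_{[0,1]}$. The point is that a function on $[0,1]$ and the corresponding function on $[0,1]^d$ can be made to have the same contraction behaviour, and the Hutchinson operator commutes with $\iota$.

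For the forward direction of the first two items, suppose $A=\bigcup_{i\le n}f_i[A]$ with each $f_i\colon[0,1]\to[0,1]$ a contraction (resp.\ weak contraction). Define $g_i\colon[0,1]^d\to[0,1]^d$ by $g_i(x_1,\dots,x_d)=(f_i(x_1),0,\dots,0)$, i.e.\ $g_i=\iota\circ f_i\circ\pi$. Since $\iota$ is an isometry onto its image and $\pi$ is $1$-Lipschitz, $g_i$ inherits the Lipschitz constant of $f_i$, so $\{g_1,\dots,g_n\}$ is an IFS (resp.\ wIFS) on $[0,1]^d$; for the weak case one checks that if $x\ne y$ then either $\pi(x)\ne\pi(y)$, in which case strict contraction of $f_i$ applies, or $\pi(x)=\pi(y)$, in which case $g_i(x)=g_i(y)$ and $d(g_i(x),g_i(y))=0<d(x,y)$. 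Then $g_i[A\times\{0\}^{d-1}]=f_i[A]\times\{0\}^{d-1}$, so $\bigcup_{i\le n}g_i[A\times\{0\}^{d-1}]=\big(\bigcup_{i\le n}f_i[A]\big)\times\{0\}^{d-1}=A\times\{0\}^{d-1}$, and by uniqueness of the attractor $A\times\{0\}^{d-1}\in L_n^d$ (resp.\ $wL_n^d$).

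For the reverse direction, suppose $A\times\{0\}^{d-1}=\bigcup_{i\le n}g_i[A\times\{0\}^{d-1}]$ for some IFS (resp.\ wIFS) $\{g_1,\dots,g_n\}$ on $[0,1]^d$. Set $f_i=\pi\circ g_i\circ\iota\colon[0,1]\to[0,1]$; again $f_i$ is a contraction (resp.\ weak contraction) because $\iota$ is an isometry and $\pi$ is $1$-Lipschitz — here the weak case uses that $\iota$ is injective, so $x\ne y$ gives $\iota(x)\ne\iota(y)$ and strict contraction of $g_i$ yields $d(g_i(\iota(x)),g_i(\iota(y)))<d(\iota(x),\iota(y))=d(x,y)$, and then $\pi$ does not increase distances. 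Applying $\pi$ coordinatewise to $A\times\{0\}^{d-1}=\bigcup_i g_i[A\times\{0\}^{d-1}]$ gives $A=\bigcup_i f_i[A]$, so $A\in L_n^1$ (resp.\ $wL_n^1$). The one subtlety here is that $\pi$ maps the attractor identity to the desired identity only because $\pi(A\times\{0\}^{d-1})=A$ and $\pi(g_i[A\times\{0\}^{d-1}])=f_i[A]$ by the very definition of $f_i$ together with $\pi\circ\iota=\mathrm{id}$; this is a direct set-image computation.

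For the third item, let $A\in\overline{L_n^1}$, so there are $A_k\in L_n^1$ with $d_H(A_k,A)\to 0$ in $K([0,1])$. By the first item $A_k\times\{0\}^{d-1}\in L_n^d$, and it is immediate (since $\iota$ is an isometric embedding) that $d_H(A_k\times\{0\}^{d-1},A\times\{0\}^{d-1})=d_H(A_k,A)\to 0$ in $K([0,1]^d)$; hence $A\times\{0\}^{d-1}\in\overline{L_n^d}$. I expect no genuine obstacle in this lemma; the only points requiring care are the verifications that $g_i$ (resp.\ $f_i$) is a \emph{weak} contraction — the strict inequality $d(f(x),f(y))<d(x,y)$ for $x\ne y$ is not automatically preserved under composition with a merely $1$-Lipschitz, non-injective map, which is why it matters that in each direction one of the two conjugating maps ($\iota$ in one direction, and the injectivity of $\iota$ feeding into the strictness in the other) is an isometric embedding, and one must separately dispose of the pairs $x\ne y$ with $\pi(x)=\pi(y)$ in the forward direction.
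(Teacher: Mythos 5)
Your proposal is correct and follows essentially the same route as the paper, which likewise transports the system between $[0,1]$ and the face $[0,1]\times\{0\}^{d-1}$ via the map $g(x_1,\dots,x_d)=(f(x_1),0,\dots,0)$ in one direction and restriction to the face in the other. Your version merely spells out the details (the projection $\pi$, the strictness check for weak contractions, and the isometry argument for the closure item) that the paper leaves implicit.
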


\begin{proof}
Observe that if $f:[0,1]^d\to[0,1]^d$ is a (weak) contraction then so is $f\upharpoonright[0,1]\times\{0\}^{d-1}$. On the other hand, for each (weak) contraction $f:[0,1]\to[0,1]$ the map $g:[0,1]^d\to[0,1]^d$ given by $g(x_1,\ldots,x_d)=(f(x_1),0,\ldots,0)$ is a (weak) contraction as well. 
\end{proof}

\section{Results}

However there are known examples of attractors for wIFSs, which are not attractors for IFSs (see \cite{NFM}), in our considerations we will need one in $wL_2^d$ for each $d\in\mathbb{N}$. Thus, we start with the following result.

\begin{proposition}
\label{p}
$wL_2^d\setminus IFS^d\neq\emptyset$ for every $d\in\mathbb{N}$.
\end{proposition}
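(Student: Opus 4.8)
The plan is to exhibit, for each $d\in\mathbb{N}$, an explicit attractor of a two-map wIFS on $[0,1]^d$ that is provably not the attractor of any (finite) IFS of contractions. By Lemma \ref{l} it suffices to do this for $d=1$: if $A\in wL_2^1\setminus IFS^1$, then $A\times\{0\}^{d-1}\in wL_2^d$ by the lemma, and one checks $A\times\{0\}^{d-1}\notin IFS^d$ since otherwise projecting onto the first coordinate (which carries any contraction on $[0,1]^d$ to a contraction on $[0,1]$, again by the argument in Lemma \ref{l}) would realize $A$ as an attractor of an IFS on $[0,1]$, using that $A$ is the first-coordinate projection of $A\times\{0\}^{d-1}$ and that the projected Hutchinson operator's fixed point is $A$. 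So I would first reduce to $d=1$ and then concentrate all the work there.

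For the one-dimensional construction I would take the known ``non-self-similar'' type example from \cite{NFM} and either cite it directly or reproduce a convenient variant: a compact set $A\subseteq[0,1]$ together with two weak contractions $f_1,f_2$ with $A=f_1[A]\cup f_2[A]$, where $A$ accumulates at a point (say $0$) with ratios tending to $1$, so that no genuine contraction ratio $L<1$ can describe the self-similar structure near that point. Concretely, something like $A=\{0\}\cup\{a_k:k\in\mathbb{N}\}$ where $a_k\downarrow 0$ with $a_{k+1}/a_k\to 1$, realized via weak contractions that fix $0$ and shift the sequence. The first step is to verify $A\in wL_2^1$: write down $f_1,f_2$ explicitly, check each satisfies $|f_i(x)-f_i(y)|<|x-y|$ for $x\neq y$ (this is where the ratios-to-$1$ behavior lives, so the $f_i$ are not contractions but are weak contractions), and check $A=f_1[A]\cup f_2[A]$ by direct computation on the listed points, noting $A$ is compact.

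The substantive step is showing $A\notin IFS^d$, equivalently (after reduction) $A\notin IFS^1$: $A$ is not the attractor of \emph{any} finite system of contractions, regardless of the number of maps. Suppose toward a contradiction that $A=\bigcup_{i=1}^k g_i[A]$ with each $g_i$ an $L_i$-contraction, $L=\max_i L_i<1$. The key is a ``bounded geometry'' obstruction near the accumulation point $0$: each $g_i$ maps $A$ into $A$, and a contraction with ratio $\leq L$ can only map a gap of $A$ of size $s$ to a set of diameter $\leq Ls$; by tracking how the consecutive gaps $a_k-a_{k+1}$ near $0$ must be covered by images $g_i[A]$, and using that those gap-ratios tend to $1$ while contraction forces a uniform multiplicative drop by $L$, I would derive that points of $A$ arbitrarily close to $0$ cannot be covered — contradiction. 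Making this counting rigorous is the main obstacle: one must argue that at least one $g_i$ has $0$ (or the limit point) in the closure of its image and then compare the induced scaling on the tail of $A$ against the fixed contraction constant $L$, ruling out all finitely many maps simultaneously. I expect this to be the crux, and the cleanest route is probably to quote the relevant non-attractor result of \cite{NFM} for a specific such $A$ and spend the proof instead verifying the two-map weak-contraction realization and the dimension lift via Lemma \ref{l}.
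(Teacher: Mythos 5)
Your overall strategy coincides with the paper's: reduce to $d=1$ via Lemma \ref{l}, build a set of the form $\{0\}\cup\{a_k:k\in\mathbb{N}\}$ with $a_k\downarrow 0$, realize it as an attractor of two weak contractions (a shift along the sequence, plus a second map producing the missed point $a_1$ --- the paper uses the constant map with value $a_1$), and rule out ordinary IFSs by a covering obstruction near $0$. The reduction and the $wL_2^1$ half are fine in outline.

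The gap is exactly where you flag it, and it is genuine. The mechanism you sketch --- ``gap-ratios tend to $1$ while contraction forces a uniform multiplicative drop by $L$'' --- does not work as stated: a contraction $g_i$ need not send consecutive points of $A$ to consecutive points, so the image of a gap of size $s$ is not a gap of size $\le Ls$, and nothing forces the $g_i$ to reproduce the local scaling of $A$. The argument that does work is a cardinality count: if $g_i(0)=0$ and $Lip(g_i)\le L<1$, then only points $a_j$ with $a_j>a_k$ can land in the annulus $(La_k,a_k]$, so $n$ maps cover at most $n(k-1)$ of the points of $A$ there; for a contradiction the construction must guarantee that some annulus contains \emph{more} points than any such linear bound, and the condition $a_{k+1}/a_k\to 1$ alone does not give this (for $a_k=1/k$ the annulus $(La_k,a_k]$ contains only about $k(1/L-1)$ points, which $n$ maps with $L$ close to $1$ could match). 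This is precisely what the paper engineers: condition (e) forces super-linear growth of the number of points in each annulus $I_n\setminus I_{n+1}$, the annuli are defined by iterating the weak contraction $f(x)=x-x^2$ so that every genuine contraction fixing $0$ satisfies $g_i[I_n]\subseteq I_{n+1}$ (its condition (iii)), and the maps with $g_i(0)\neq 0$ are discarded because their images meet $X$ in a finite set. Finally, falling back on \cite{NFM} does not close the gap either: what Proposition \ref{p} needs is an example realized by \emph{two} weak contractions, and verifying that the example of \cite{NFM} admits a two-map realization is essentially the same work as the construction you are deferring.
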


\begin{proof}
By Lemma \ref{l} it suffices to find $X\in wL_2^1\setminus IFS^1$.

Let $f:[0,1]\to[0,1]$ be given by $f(x)=x-x^2$ for all $x\in[0,1]$. Note that $f$ is a weak contraction. Indeed, for all distinct $x,y\in[0,1]$ we have $|f(x)-f(y)|=|x-y|\cdot|1-(x+y)|<|x-y|$.

Define a sequence of intervals by $I_1=[0,\frac{1}{2}]$ and $I_{n+1}=[0,f(\max I_n)]$. Observe that $\lim_n\max I_n=0$. 

Inductively pick points $x_n$ for $n\in\mathbb{N}$ in such a way that for each $n\in\mathbb{N}$ we have:
\begin{itemize}
\item[(a)] $x_n\in I_1$,
\item[(b)] $x_{n+1}<x_n$,
\item[(c)] $x_n-x_{n+1}>x_{n+1}-x_{n+2}$,
\item[(d)] $\{x_i:i\in\mathbb{N}\}\cap(I_n\setminus I_{n+1})$ is finite,
\item[(e)] $k_1=2$ and $k_n>n\cdot(\sum_{i=1}^{n-1}k_i)$, where $k_n=|\{x_i:i\in\mathbb{N}\}\cap(I_n\setminus I_{n+1})|$.
\end{itemize}
Note that item (d) guarantees that $\lim_n x_n=0$. Put $X=\{0\}\cup\{x_n:n\in\mathbb{N}\}$.

We need to show that $X\in wL_2^1\setminus IFS^1$. Firstly, notice that by items (b) and (c) there is a weak contraction $g:[0,1]\to[0,1]$ such that $g(x_n)=x_{n+1}$ for all $n\in\mathbb{N}$. Observe that the unique fixed point of $g$ has to be $0$. Thus, if $h:[0,1]\to[0,1]$ is the function constantly equal to $x_1$ then $X=g[X]\cup h[X]$, i.e., $X\in wL_2^1$. 

On the other hand, assume to the contrary that $f_1,\ldots,f_k:[0,1]\to[0,1]$ are standard contractions such that $X=\bigcup_{i=1}^k f_i[X]$. 

Find $n\in\mathbb{N}$ such that: 
\begin{itemize}
\item[(i)] $n>k$,
\item[(ii)] if $f_i(0)\neq 0$ then $f_i[X]\cap X\cap I_n=\emptyset$,
\item[(iii)] $\max_{i\leq k}Lip(f_i)\cdot|I_n|<|I_{n+1}|$.
\end{itemize}
The second item is possible to achieve as for each $i\leq k$ either $f_i(0)=0$ or $f_i[X]\cap X$ is finite (by $\lim_n x_n=0$). For the third item use the fact that for each $L\in(0,1)$ we have $|f(x)-f(0)|=|x|\cdot|1-x|>L|x-0|$ for any $x\in(0,1-L)$.

We claim that $X\cap I_n\setminus I_{n+1}$ cannot be covered by $f_i[X]$. Indeed, if $f_i(0)\neq 0$ then $f_i[X]\cap X\cap I_n=\emptyset$ by item (ii). What is more, if $f_i(0)=0$ then $f_i[I_n]\subseteq I_{n+1}$ (by item (iii)), so 
$$(X\cap I_n\setminus I_{n+1})\cap\bigcup_{i\leq k}f_i[X]=(X\cap I_n\setminus I_{n+1})\cap\bigcup_{i\leq k}f_i[X\cap I_1\setminus I_n].$$ 
As $|X\cap I_1\setminus I_n|=\sum_{i=1}^{n-1}k_i$, we conclude that 
$$\left|\bigcup_{i\leq k}f_i[X\cap I_1\setminus I_n]\right|\leq k\cdot\sum_{i=1}^{n-1}k_i<n\cdot \sum_{i=1}^{n-1}k_i<k_n=|X\cap(I_n\setminus I_{n+1})|.$$ Thus, $X\notin IFS^1$ and the proof is completed. 
\end{proof}

The rest of our paper is devoted to studies of the families $\overline{L_n^d}$.

\begin{theorem}\label{thmLnW}
$\overline{L_n^d} = \overline{wL_n^d}$ for every $n,d\in\mathbb{N}$.
\end{theorem}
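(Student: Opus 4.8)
The plan is to prove the two inclusions $\overline{L_n^d}\subseteq\overline{wL_n^d}$ and $\overline{wL_n^d}\subseteq\overline{L_n^d}$ separately. The first is immediate: by Remark~\ref{r} we have $L_n^d\subseteq wL_n^d$, hence $\overline{L_n^d}\subseteq\overline{wL_n^d}$, with no work required. The real content is the reverse inclusion, and since $\overline{L_n^d}$ is closed it suffices to show $wL_n^d\subseteq\overline{L_n^d}$; that is, given an attractor $A$ of a wIFS $\{s_1,\ldots,s_n\}$ acting on $[0,1]^d$, I must produce a sequence $(A^{(m)})_m$ of attractors of genuine $n$-element IFSs with $d_H(A^{(m)},A)\to 0$.

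The natural idea is to perturb each weak contraction $s_i$ into an honest contraction. For $\lambda\in(0,1)$ set $s_i^\lambda=\lambda\cdot s_i$ — more precisely, compose $s_i$ with the scaling $x\mapsto \lambda x$ toward the origin (or toward any fixed basepoint of $[0,1]^d$); since $\|s_i^\lambda(x)-s_i^\lambda(y)\|=\lambda\|s_i(x)-s_i(y)\|\le\lambda\|x-y\|$ and $\lambda<1$, each $s_i^\lambda$ is a genuine contraction with Lipschitz constant at most $\lambda$, so the system $\{s_1^\lambda,\ldots,s_n^\lambda\}$ is an IFS in the classical sense, and by the Banach fixed-point theorem its Hutchinson operator $\mathcal{S}^\lambda$ has a unique attractor $A^\lambda\in L_n^d$. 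As $\lambda\to 1^-$, the functions $s_i^\lambda$ converge uniformly to $s_i$ on the compact set $[0,1]^d$. The goal is then $d_H(A^\lambda,A)\to 0$ as $\lambda\to 1^-$, which by Lemma~\ref{l} we may even verify in the one-dimensional case, though here working directly in $[0,1]^d$ costs nothing.

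To get this convergence I would argue as follows. Let $\mathcal{S}$ be the Hutchinson operator of the original wIFS, so $\mathcal{S}(A)=A$. First, $\mathcal{S}^\lambda\to\mathcal{S}$ in an appropriate sense: for any $E\in K([0,1]^d)$, applying Lemma~\ref{DSlemma42} with $E_k=E$ (constant) and $S_k=s_i^{\lambda_k}$ for $\lambda_k\to 1$ gives $d_H(s_i^{\lambda_k}[E],s_i[E])\to 0$ for each $i$, and since the Hausdorff distance between finite unions is controlled by the maximum of the distances between corresponding pieces, $d_H(\mathcal{S}^{\lambda_k}(E),\mathcal{S}(E))\to 0$. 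Now fix a sequence $\lambda_k\to 1^-$ and let $A^{(k)}=A^{\lambda_k}$. The family $\{A^{(k)}\}\cup\{A\}$ lives in the compact space $K([0,1]^d)$, so to show $A^{(k)}\to A$ it suffices to show every convergent subsequence has limit $A$. Suppose $A^{(k_j)}\to B$. Applying Lemma~\ref{DSlemma42} to the sequences $E_j=A^{(k_j)}$ (with limit $B$) and $S_j=s_i^{\lambda_{k_j}}$ (with uniform limit $s_i$) yields $s_i^{\lambda_{k_j}}[A^{(k_j)}]\to s_i[B]$ for each $i$, hence $\mathcal{S}^{\lambda_{k_j}}(A^{(k_j)})\to\mathcal{S}(B)$; but $\mathcal{S}^{\lambda_{k_j}}(A^{(k_j)})=A^{(k_j)}\to B$, so $\mathcal{S}(B)=B$. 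Since the wIFS $\{s_1,\ldots,s_n\}$ has a \emph{unique} attractor (by \cite{E}, as cited in the Preliminaries), $B=A$. This forces $A^{(k)}\to A$, completing the argument.

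The main obstacle is the uniqueness-of-limit step: one must be careful that the perturbation $s_i^\lambda$ is genuinely a contraction (not merely a weak contraction) and that it still maps $[0,1]^d$ into itself — scaling toward an interior basepoint of the cube handles both, since $[0,1]^d$ is convex and contains the basepoint. A secondary point to check is that the convergence $s_i^\lambda\to s_i$ is truly uniform on $[0,1]^d$, which follows from $\|s_i^\lambda(x)-s_i(x)\|=(1-\lambda)\|s_i(x)-p\|\le(1-\lambda)\,\mathrm{diam}([0,1]^d)$ for the basepoint $p$. Everything else is a routine application of Lemma~\ref{DSlemma42} together with the compactness of $K([0,1]^d)$ and the uniqueness of attractors for weak iterated function systems.
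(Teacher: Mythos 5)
Your proof is correct, and it reaches the result by a genuinely different (and in fact slicker) construction of the approximating contractions. The paper's proof has the same overall skeleton as yours --- approximate each weak contraction $f_j$ uniformly by honest contractions, take the attractors $A_k$ of the approximating IFSs, use compactness of $K([0,1]^d)$ to extract a convergent subsequence, identify the limit $B$ via Lemma~\ref{DSlemma42} as a fixed point of the Hutchinson operator of the original wIFS, and conclude $B=A$ by uniqueness of wIFS attractors --- but it builds the approximants as piecewise-linear interpolants of $f_j$ at finitely many rational points. That route forces the authors to argue separately that the interpolant is a contraction (taking as Lipschitz constant the maximum of the local constants $L^j_{x,y}$ over the interpolation nodes, a step that is transparent for $d=1$ but requires some care for piecewise-linear interpolation in higher dimensions) and to invoke Arzel\`a--Ascoli to upgrade the convergence $g^j_k\to f_j$ to uniform convergence. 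Your homothety trick $s_i^\lambda(x)=p+\lambda\,(s_i(x)-p)$ sidesteps both issues: the Lipschitz constant is exactly $\lambda<1$ by a one-line computation (using only that a weak contraction is $1$-Lipschitz), the range stays in $[0,1]^d$ by convexity, and the uniform error bound $(1-\lambda)\,\mathrm{diam}([0,1]^d)$ is explicit, so no equicontinuity argument is needed. The endgame --- every subsequential limit of $(A^{\lambda_k})$ is a fixed point of $\mathcal{S}$, hence equals $A$ by the Edelstein uniqueness theorem, hence the full sequence converges --- is identical in both proofs. Your version also generalizes verbatim to any compact convex subset of a normed space, which the interpolation argument does not obviously do.
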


\begin{proof}
As $\overline{L_n^d} \subseteq \overline{wL^d_n}$, it suffices to prove that $wL_n^d \subseteq \overline{L^d_n}$.

	Let us fix $A\in wL^d_n$. There exist weak contractions $f_1, \dots, f_n$ such that
	$$A = \bigcup_{i=1}^n f_i[A].$$
	
	Let us denote $(\mathbb{Q}\cap [0,1])^d \setminus\{(0,\ldots,0),(1,\ldots,1)\}= \{q_i\colon i \in \mathbb{N}\}, q_i\neq q_j$.
	
	For $j\leqslant n, k\in\mathbb{N} $ let us define $g_k^j\colon [0,1]^d\to [0,1]^d$ as a partially linear function connecting the following points: $ (0,f_j(0)), (1,f_j(1)), \dots, (q_i,f_j(q_i)) $ for $i\leqslant k$ (for $d=1$ the function $g_k^j$ is a polygonal chain). You can view $g_k^j$ as a \textit{polygonal approximation} of $f_j$.
	
	We have the following:
	\begin{enumerate}
		\item $g_k^j \xrightarrow{k \to \infty} f_j$,
		\item $g_k^j$ is a contraction with the following Lipschitz constant: 
		$$ \max\{ L_{x,y}^j \colon x,y\in\{ (0,\ldots,0),(1,\ldots,1),q_1, \dots, q_k \} \} ,$$ 
		where $L_{x,y}^j \in (0,1)$ is such that $ d( f_j(x) , f_j(y) ) \leqslant L_{x,y}^j \cdot d(x,y) $.
	\end{enumerate}
	The existence of those $L_{x,y}^j$ follows from each $f_j$ being a weak contraction: since for fixed $x\neq y$ we have $d( f_j(x) , f_j(y) ) < d(x,y)$, then for those specific $x,y$ (rather than globally) we can find a constant $L_{x,y}^j \in (0,1)$ such that $d( f_j(x) , f_j(y) ) \leqslant L_{x,y}^j \cdot d(x,y)$ holds.
	
	For fixed $k$ let $A_k \in K([0,1]^d)$ be such that
	$$A_k = \bigcup_{j=1}^n g_k^j[A_k]. $$
	It follows that $A_k \in L^d_n$. We will show that $A = \lim_k A_k$, i.e. $A \in \overline{L^d_n}$.
	
	Fix $\varepsilon > 0$. We are looking for $k$ such that $d_H(A,A_k)<\varepsilon$.
	
	We will use the Arzelà–Ascoli theorem. The functions $g_k^j$ are uniformly bounded and uniformly equicontinuous, therefore they have a subsequence that is uniformly convergent. However, we know that $(g_k^j)$ is convergent to $f^j$, therefore this particular convergence is uniform.
	
	As $(A_k)_k \subseteq K([0,1]^d)$ and the space is compact, $A_k$ has a convergent subsequence, $( A_{k_m} )_m$. Let us denote its limit by $B$. Applying Lemma \ref{DSlemma42}, we obtain
	$$ \lim_{m\to\infty} d_H \left( g_{k_m}^j[A_{k_m}], f^j[B] \right) = 0.$$
	It is easy to see that $ d_H(A \cup B, C \cup D) \leqslant d_H(A,C)+d_H(B,D) $. Using this, as well as the preceding convergence and the fact that $ \bigcup_j g_{k_m}^j[A_{k_m}] = A_{k_m} $, we have
	$$ \lim_{m\to\infty} d_H \left( \bigcup_{j=1}^n g_{k_m}^j[A_{k_m}], \bigcup_{j=1}^n f^j[B] \right) = 0.  $$
	We also know that $d_H(A_{k_m},B)\to 0$, hence $B = \bigcup_j f^j[B] $ and therefore $B=A$.
	
	It follows that $A$ is the limit of $(A_{k_m}) \subseteq L^d_n$.
\end{proof}

\begin{theorem}
\label{(n+1)-n}
	$L^d_{n+1}\setminus \overline{L^d_n} \neq \emptyset$ for every $n,d\in\mathbb{N}$.
\end{theorem}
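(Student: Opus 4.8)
The plan is to exhibit, for each $n,d$, a single compact set $A\subseteq[0,1]^d$ lying on the first coordinate axis with $A\in L^d_{n+1}\setminus\overline{L^d_n}$; since $A$ sits on an axis, every estimate (diameters, distances, Lipschitz constants) reduces to the one–dimensional picture, so I describe everything in $[0,1]$. For $A$ I take a union $A=I_1\cup\dots\cup I_{n+1}$ of $n+1$ pairwise disjoint closed intervals with lengths decreasing very fast, say $|I_k|=N^{-k}$ for a large integer $N>n+1$ (so that $|I_j|>n\sum_{l>j}|I_l|$ for every $j$), placed so that the gaps between consecutive $I_j$'s are much larger than $|I_1|$.

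First, $A\in L^1_{n+1}$: for each $i\le n+1$ let $s_i\colon[0,1]\to[0,1]$ be the piecewise–linear map that sends the $j$-th component $I_j$ affinely, with slope of absolute value $|I_i|/\sum_k|I_k|<1$, onto the $j$-th block of a fixed partition of $I_i$ into $n+1$ subintervals of lengths proportional to $|I_1|,\dots,|I_{n+1}|$, and is constant on each gap of $A$ and on $[0,\min A]$ and on $[\max A,1]$. Then $Lip(s_i)=|I_i|/\sum_k|I_k|<1$ and $s_i[A]=I_i$, so $A=\bigcup_{i=1}^{n+1}s_i[A]$. Lifting each $s_i$ to $[0,1]^d$ by $(x_1,\dots,x_d)\mapsto(s_i(x_1),0,\dots,0)$ (still a contraction) gives $A\times\{0\}^{d-1}\in L^d_{n+1}$.

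Now I turn to $A\notin\overline{L^1_n}$. Suppose to the contrary that $B\in L^1_n$ with $d_H(B,A)=\varepsilon$ as small as we wish, and $B=\bigcup_{i=1}^n s_i[B]$ with $s_i$ contractions. For $\varepsilon$ small the $\varepsilon$-neighbourhoods of the $I_j$ are pairwise disjoint, so $B=D_1\sqcup\dots\sqcup D_{n+1}$ with each $D_j$ $\varepsilon$-dense in $I_j$ and contained in the $\varepsilon$-neighbourhood of $I_j$; in particular $\operatorname{diam}D_j\le|I_j|+2\varepsilon$, which — because the gaps dwarf $|I_1|$, and irrespective of the size of $Lip(s_i)$ — forces $s_i[D_j]$ into a single cluster $D_{\tau_i(j)}$, defining maps $\tau_i\colon\{1,\dots,n+1\}\to\{1,\dots,n+1\}$. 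Since $D_j=\bigcup\{s_i[D_l]:\tau_i(l)=j\}$ is $\varepsilon$-densely covered by intervals of lengths $\le|I_l|+O(\varepsilon)$, if none of the contributing pairs $(i,l)$ had $l\le j$ the total length available would be at most $n\sum_{l>j}|I_l|+O(\varepsilon)<|I_j|$ — impossible. Hence for every target $j$ some $s_i$ carries a component no larger than $I_j$ into $I_j$; tracking these "witnesses" from $j=n+1$ down to $j=1$ (and, where this is not yet decisive, recursing into the finer structure that a faithful approximation $B$ must reproduce inside each cluster) I would force a contradiction with only $n$ maps being available.

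The crux — and the main obstacle I expect — is exactly this last step, because nothing prevents $Lip(s_i)$ from tending to $1$ as $B\to A$: a contraction with Lipschitz constant near $1$ can carry a cluster of $B$ almost isometrically onto another cluster of comparable diameter, so the naive "iterate the IFS until all pieces are tiny, then count pieces against clusters" argument fails (the number of iterations needed is unbounded). I would neutralise this by exploiting the geometric rigidity designed into $A$: with the $|I_j|$ rapidly decreasing and the gaps chosen mutually incommensurable, a near–isometric contraction cannot match $D_l$ to $D_j$ unless $|I_l|>|I_j|$, and then the strict inequality $Lip(s_i)<1$ — used quantitatively at the single small $\varepsilon$ at which the contradiction is derived — rules out even the borderline case. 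Making this scale–by–scale rigidity precise is the technical heart of the proof and the part I expect to be most delicate to write down.
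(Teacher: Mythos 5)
Your first half ($A\in L^d_{n+1}$) is fine, but the second half has a genuine gap --- one you partly acknowledge yourself, since you leave the ``scale-by-scale rigidity'' step as a declared obstacle rather than an argument. The problem is worse than a missing technical lemma: the witness set itself is wrong. A union $A=I_1\cup\dots\cup I_{n+1}$ of disjoint intervals with rapidly decreasing lengths is, for natural choices of the gaps (e.g.\ non-increasing gaps, each much larger than $|I_1|$, which is consistent with everything you specified), already an attractor of a \emph{two}-map IFS: take a piecewise affine ``shift'' contraction $f_1$ with $f_1[I_j]=I_{j+1}$ for $j\le n$ and $f_1[I_{n+1}]\subseteq I_{n+1}$ (all slopes are $<1$ because the lengths decrease, and the slopes across the gaps are $<1$ when the gaps do not increase), together with a ``collapse'' contraction $f_2$ with $f_2[A]=I_1$ (slope $|I_1|/\sum_k|I_k|<1$, exactly as in your own first half). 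Then $f_1[A]\cup f_2[A]=I_2\cup\dots\cup I_{n+1}\cup I_1=A$, so $A\in L^d_2\subseteq\overline{L^d_n}$ for every $n\ge 2$. This shift-plus-collapse mechanism is precisely what the paper uses in the \emph{opposite} direction in Theorem \ref{2-wIFS} to show that $\overline{L^d_2}$ already contains sets with many (even infinitely many) interval components; your length-counting constraint (``each target $I_j$ needs a source $I_l$ with $l\le j$'') is satisfied by this two-map system, which is why your counting cannot close.

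The paper's proof avoids both difficulties by changing the witness and adding one decisive idea. The witness is a \emph{finite} set $F$ made of $n+1$ clusters of $k=n^2+1$ points each, with intra-cluster spacings decreasing geometrically and inter-cluster gaps $b_{i+1}=2b_i$ \emph{increasing} (this kills the rightward shift). The decisive idea is the Banach fixed point: if $A=\bigcup_{i\le n}g_i[A]$ approximates $F$, each $g_i$ has its fixed point $z_i$ in $A$, hence near some cluster $F_{k_i}$, and one shows $g_i\bigl[\bigcup_{j\le k_i}A_j\bigr]\subseteq A_{k_i}$ while $g_i[A_j]$ for $j>k_i$ only moves downward one cluster at a time. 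Since only $n$ fixed points exist, some cluster $F_m$ contains none, and it can then receive at most $n\cdot n=n^2$ image pieces, each meeting at most one of its $n^2+1$ point-neighbourhoods (here $Lip(g_i)<1$ is used only qualitatively, so the ``Lipschitz constant tending to $1$'' issue you worry about never arises). This two-level structure (clusters of many separated \emph{points}, not intervals) plus the fixed-point bookkeeping is exactly what your proposal is missing, and without some substitute for it the approach via $n+1$ intervals cannot be repaired.
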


\begin{proof}
This proof is based on ideas from \cite[Lemma 4.2]{my}.

Fix $n\in\mathbb{N}$ and let us define $k = n^2+1$. Pick points $a_1,\dots,a_{n+1}, b_1, \dots, b_n$ so that the following conditions are met:
	\begin{itemize}
		\item[1.] $\sum_{i=1}^n b_i + (\sum_{i=1}^{n+1}a_i)(k-1) < 1 $,
		\item[2.] $(k-1) a_{i+1}<a_i \left(\frac{9}{10}\right)^{k-2}$,
		\item[3.] $b_1 = 10 a_1 (k-1) $,
		\item[4.] $b_{i+1}=2b_i $.
	\end{itemize}

Define points $x_{i,j}$ for all $i\leq n+1$ and $j\leq k$ by:
$$x_{1,1}=0,\ x_{i,j+1} = x_{i,j} + a_i\left(\frac{9}{10}\right)^{j-1},\ x_{i+1,1} = x_{i,k} + b_i.$$
Put $y_{i,j}=(x_{i,j},0,\ldots,0)\in[0,1]^d$. Let $F_i = \{ y_{i,j} \colon j \in \{ 1, \dots , k \} \}$ and $F = \bigcup_{i\leq n+1} F_i$. We claim that $F\in L^d_{n+1}\setminus \overline{L^d_n}$.

First we will show that $F\in L^d_{n+1}$. Let $f_1,\ldots,f_{n+1}$ be any contractions such that for each $i\leq n+1$ we have $f_i[F\setminus F_i]=\{y_{i,1}\}$, $f_i(y_{i,k})=y_{i,k}$ and $f_i(y_{i,j})=y_{i,j+1}$, for all $j\leq k-1$. Note that the construction ensures that $d(f_i(x),f_i(y))<d(x,y)$ for all distinct $x,y\in F$. Since $F$ is a finite set, there is $L_i\in(0,1)$ such that $d(f_i(x),f_i(y))\leq L_i d(x,y)$ for all $x,y\in F$. Thus, $f_i\upharpoonright F$ defined in this way can be extended to a contraction $f_i:[0,1]^d\to[0,1]^d$ with the same Lipschitz constant $L_i$. Now it suffices to observe that $F=\bigcup_{i\leq n+1}f_i[F]$.

In order to show that $F\notin \overline{L^d_n}$, put $\delta = \frac{1}{4}\cdot a_{n+1}\left(\frac{9}{10}\right)^{k-2}$ (i.e., $\delta$ is one-fourth of $d(y_{n+1,k-1},y_{n+1,k})$, which is the minimal distance between two points from $F$). We claim that $B(F,\delta)\cap L^d_n=\emptyset$, where $B(F,\delta)$ denotes the ball in $K([0,1]^d)$ of radius $\delta$ centered in $F$. 

Suppose to the contrary that there is $A\in L_n^d$ which is $\delta$-close to $F$. For convenience, put $\widetilde{E}_\delta = \bigcup_{x\in E} B(x,\delta)$ for each $E\subseteq[0,1]^d$ (here $B(x,\delta)$ denotes the ball in $[0,1]^d$ of radius $\delta$ centered at $x$). Then $A\subseteq F_\delta$ and $A\cap B(x,\delta)\neq\emptyset$ for every $x\in F$. Since $A\in L_n^d$, there are contractions $g_1,\ldots,g_n$ such that $A=\bigcup_{i\leq n}g_i[A]$. By the Banach fixed-point theorem, each contraction $g_i$ has a unique fixed point $z_i$. Then $z_i\in A$ as otherwise for $x\in A$ given by $d(z_i,x)=d(z_i,A)>0$ ($d(z_i,A)$ is the distance between the point $z_i$ and the compact set $A$) we would have $d(z_i,g_i(x))=d(g_i(z_i),g_i(x))<d(z_i,x)=d(z_i,A)$ which contradicts $g_i(x)\in A$. Thus, for each $i\leq n$ there is $k_i\leq n+1$ such that $z_i\in \widetilde{\left(F_{k_i}\right)}_\delta$. Thus, there is $m\leq n+1$ with $m\notin\{k_i:i\leq n\}$. We will show that $A\cap \widetilde{\left(F_{m}\right)}_\delta$ cannot be covered by $\bigcup_{i\leq n}g_i[A]$. 

Denote $A_i=A\cap \widetilde{\left(F_{i}\right)}_\delta$ for all $i\leq n+1$. Observe that for all $i$ we have $b_i<b_{i+1}$ (by condition 4.) and $a_i>a_{i+1}$ (by condition 2.). Thus, by condition 3. for each $i\leq n$ and $j\leq n+1$ the set $g_i[A_j]$ cannot intersect two sets of the form $A_{j'}$. In particular, $g_i[A_{k_i}]\subseteq A_{k_i}$. Moreover, using $b_i<b_{i+1}$ once again, observe that if $x\in A_{k_i-1}$ and $y\in A_{k_i}$ are such that $d(x,y)=d(A_{k_i-1},A_{k_i})$ (here $d(A_{k_i-1},A_{k_i})$ is the distance between the compact sets $A_{k_i-1}$ and $A_{k_i}$) then $g_i(x)\in A_{k_i}$. Continuing this reasoning, we obtain that
$$g_i\left[\bigcup_{j\leq k_i}A_j\right]\subseteq A_{k_i}$$ 
for all $i\leq n$. Hence, $A_m\cap g_i[A]=A_m\cap\bigcup_{j>k_i}g_i[A_j]$. Notice also that either $k_i>m$ and $A_m\cap g_i[A]=A_m\cap \bigcup_{j>m}g_i[A_j]$ or $k_i\leq m$. In the latter case we have 
$$g_i[A_{k_i+1}]\subseteq \bigcup_{j\leq k_i}A_{j},$$ 
$$g_i[A_{k_i+2}]\subseteq \bigcup_{j\leq k_i}A_{j}\cup A_{k_i+1},$$ 
$$g_i[A_{k_i+3}]\subseteq \bigcup_{j\leq k_i}A_{j}\cup A_{k_i+1}\cup A_{k_i+2}$$ 
etc. In particular, we can again conclude that $A_m\cap g_i[A]=A_m\cap \bigcup_{j>m}g_i[A_j]$. 
Therefore, 
$$A_m\cap\bigcup_{i\leq n}g_i[A]=A_m\cap\bigcup_{i\leq n}\bigcup_{m<j\leq n+1}g_i[A_j].$$

By the definition of $\delta$, for each $i\leq n$ and $x\in F$ the set $g_i[B(x,\delta)]$ can intersect only one ball of the form $B(y,\delta)$, for $y\in F_m$. Using condition 2. we see that actually for every $j>m$ the whole set $g_i[A_j]$ can intersect only one ball of the form $B(y,\delta)$, for $y\in F_m$. This means that $\bigcup_{i\leq n}\bigcup_{m<j\leq n+1}g_i[A_j]$ can intersect no more than $n^2$ balls of this form. As $k=n^2+1$, at least one ball will not intersect the set $\bigcup_{i\leq n}g_i[A]=A$. Hence, $A$ is not $\delta$-close to $F$. This finishes the proof.
\end{proof}

Recall that $ \overline{L^d_1}$ is the family of all singletons (which is a rather small set). Next result shows that $ \overline{L^d_2}$ is a much richer family.

\begin{theorem}
\label{2-wIFS}
	$ \overline{L^d_2} \setminus wIFS^d \neq \emptyset$ for every $d\in\mathbb{N}$.
\end{theorem}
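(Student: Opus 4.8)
By Lemma~\ref{l} it suffices to construct a compact $X\in\overline{L_2^1}\setminus wIFS^1$. The plan is to combine the two phenomena already isolated in the paper: the construction from Proposition~\ref{p} shows how a countable set accumulating at a single point can fail to be an attractor for any IFS because of a ``counting'' obstruction on the cardinalities of pieces $X\cap(I_n\setminus I_{n+1})$; and Theorem~\ref{thmLnW} shows that $\overline{L_2^1}=\overline{wL_2^1}$, so to land in $\overline{L_2^1}$ it is enough to land in $\overline{wL_2^1}$, i.e.\ to exhibit $X$ as a Hausdorff limit of attractors of $2$-element wIFSs. The idea is therefore to take a set $X$ of the same shape as in Proposition~\ref{p} --- $X=\{0\}\cup\{x_n:n\in\mathbb N\}$ with $x_n\downarrow 0$, the gaps $x_n-x_{n+1}$ strictly decreasing, and with a rapidly growing ``block-size'' sequence $k_n=|X\cap(I_n\setminus I_{n+1})|$ satisfying a superpolynomial growth condition like $k_n>n\cdot\sum_{i<n}k_i$ --- but to also arrange that $X$ itself (not just sets near it) is a limit of $2$-element wIFS attractors, while the counting obstruction is robust enough to kill membership in $wIFS^d$, not merely in $IFS^d$.

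The first step is to make the counting obstruction work against \emph{weak} contractions. The argument in Proposition~\ref{p} used that a standard contraction with Lipschitz constant $L<1$ must map $I_n$ deep inside $I_{n+1}$ (item (iii) there), which forced each $g_i[X]$ to only reuse the finitely many points above $I_n$. For a weak contraction this fails in general, so I would instead push the estimate to the ``limit attractor'' level: if $X\in\overline{wL_2^1}=\overline{L_2^1}$ is \emph{also} in $wIFS^d$, derive a contradiction by looking at the approximating IFS attractors $A_k\to X$ from the proof of Theorem~\ref{thmLnW} together with the hypothetical wIFS $\{s_1,\dots,s_k\}$ with $X=\bigcup s_i[X]$. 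Each $s_i$ fixes $0$ or has finite orbit-image intersection with $X$, exactly as before; the point $0$ being the unique accumulation point of $X$ forces any $s_i$ with $s_i(0)=0$ to be "locally contracting near $0$" in the sense that $s_i[I_n]\subseteq I_{n+1}$ for all large $n$ (here one uses that $s_i$ is a weak contraction \emph{and} that $X$ accumulates only at $0$, so $s_i$ restricted near $0$ genuinely shrinks distances toward the fixed point $0$ and cannot have a ``flat'' direction compatible with $X$'s combinatorics). Once that local-contraction fact is in hand, the cardinality count $|\bigcup_i s_i[X\cap I_1\setminus I_n]|\le k\cdot\sum_{i<n}k_i<k_n$ reproduces verbatim and contradicts $X\cap(I_n\setminus I_{n+1})\subseteq\bigcup s_i[X]$.

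The second step is to certify $X\in\overline{L_2^1}$. By Theorem~\ref{thmLnW} it is enough to show $X\in\overline{wL_2^1}$, and for that I would produce, for each $\varepsilon$, a $2$-element wIFS whose attractor is $\varepsilon$-close to $X$; the natural candidates are the truncations. Fix $m$; let $g\colon[0,1]\to[0,1]$ be a weak contraction with $g(x_n)=x_{n+1}$ for $n\ge m$ (available by the strictly-decreasing-gaps condition, as in Proposition~\ref{p}) and with unique fixed point $0$, and let $h$ be constant equal to $x_1$ on the relevant set, suitably modified below $x_m$ so that the attractor of $\{g,h\}$ is the \emph{finite} set $\{0,x_m,x_{m-1},\dots,x_1\}$ together with a short tail --- more precisely, arrange the attractor to be $X_m:=\{0\}\cup\{x_n:n\ge m\}\cup\{h\text{-images}\}$, which Hausdorff-converges to $X$ as $m\to\infty$ since $x_m\to 0$. (Alternatively, and perhaps more cleanly, realize each $X_m$ directly as a $2$-element wIFS attractor using one weak contraction that ``does the geometric tail'' and one affine-type map that fills in the finitely many high points, exactly the two-map pattern of Proposition~\ref{p}.) Then $X=\lim_m X_m\in\overline{wL_2^1}=\overline{L_2^1}$.

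The main obstacle I anticipate is the second step's verification that the hypothetical wIFS forces $s_i[I_n]\subseteq I_{n+1}$ for large $n$ when $s_i(0)=0$: unlike genuine contractions, a weak contraction can approach the identity near $0$, so one must use the discreteness/combinatorics of $X$ itself (the fact that consecutive gaps shrink and that $X$ has no accumulation point other than $0$) to rule out an $s_i$ that, say, maps $x_n\mapsto x_n$ for infinitely many $n$ or only moves points a negligible amount. Getting a \emph{uniform} (in $i$, for $i\le k$) quantitative version of ``weak contraction fixing $0$ shrinks the scales $I_n$'' compatible with the block-growth condition (e), and making the choice of $(x_n)$ and $(k_n)$ simultaneously satisfy this and the approximability in step two, is where the real work lies; everything else is a careful repackaging of Proposition~\ref{p} and Theorem~\ref{thmLnW}.
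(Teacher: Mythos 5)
There is a fatal gap in your first step: the candidate set you propose cannot lie outside $wIFS^1$. You keep the shape of the set from Proposition \ref{p}, in particular the condition that the gaps $x_n-x_{n+1}$ are strictly decreasing. But that condition is exactly what makes the shift map $g$ with $g(x_n)=x_{n+1}$, $g(0)=0$ extendable to a weak contraction of $[0,1]$ (this is the content of the first half of the proof of Proposition \ref{p}), and then $X=g[X]\cup h[X]$ with $h$ constantly equal to $x_1$, so $X\in wL_2^1\subseteq wIFS^1$ no matter how fast the block sizes $k_n$ grow. The obstacle you flag at the end --- that a weak contraction fixing $0$ need not satisfy $s_i[I_n]\subseteq I_{n+1}$ and may ``only move points a negligible amount'' --- is not a technical difficulty to be engineered away; the shift map is precisely such an $s_i$, it is forced to exist by your own hypotheses on $(x_n)$, and it covers each block $X\cap(I_n\setminus I_{n+1})$ from within $I_n$, so the cardinality count $k\cdot\sum_{i<n}k_i<k_n$ never gets off the ground. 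More generally, the point-counting obstruction of Proposition \ref{p} is intrinsically an obstruction against genuine contractions (it needs a uniform Lipschitz bound to force images of $I_n$ into $I_{n+1}$), and no discrete convergent sequence with strictly decreasing gaps can defeat weak contractions.

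The paper's proof uses a genuinely different mechanism: $X$ is taken to be $\{0\}$ together with a union of nondegenerate closed intervals $I_n^k$, arranged in blocks of $k_n$ intervals of equal length, with the gaps inside a block equal to that common length. The counting is then on whole intervals rather than points: a weak contraction applied to a single block can cover at most one interval of that block (the image of each interval has strictly smaller diameter), the images of the finitely many earlier, longer intervals contribute at most $\sum_{i<n}k_i$ covered intervals per map, and the later blocks are too short in total length to cover any interval at all. This diameter-and-length argument survives the passage from contractions to weak contractions, which is exactly what your point-counting cannot do. Your second step (approximating $X$ from within $wL_2^1$ and invoking Theorem \ref{thmLnW}) is fine in spirit and close to what the paper does, but it is moot until the first step is replaced by a construction that actually escapes $wIFS^1$.
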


\begin{proof}
	By Lemma \ref{l}, we can work in the space $[0,1]$. Our goal is to construct a set $X\in \overline{L^1_2} \setminus wIFS^1$. 
	
	We will start the construction with a definition of a sequence $(k_n)$. Put 
	$$ k_1=1 ,\  k_n = n \cdot \left( \sum_{i<n} k_i \right) + 1 + n  .$$
	Now let $(I_n^k)_{n\in\mathbb{N}, k \leqslant k_n}$ be a sequence of closed intervals such that the following conditions hold
	\begin{enumerate}
		\item $\forall_n\:\forall_{i,j\leqslant k_n}\:\: |I_n^i|=|I_n^j|>n\cdot \sum_{m>n, k \leqslant k_m} |I_m^k|. $ 
		\item $\forall_{n,j}\:\: \max(I_{n}^{j+1}) < \min(I_n^{j}) $ and $\forall_n\:\: \max(I_{n+1}^1) < \min(I_n^{k_n}) $.
		\item $\forall_{n,j}\:\: d( I_n^j, I_n^{j+1} ) = |I_n^1|$ and $\forall_n\:\: d( I_{n+1}^1 , I_n^{k_n} ) = | I_{n+1}^1 | $.
		\item Values $|I_1^1|$ and $\max(I_1^1)$ are chosen so that
		$$ \overline{\bigcup_{n\in\mathbb{N}, k \leqslant k_n} I_n^k} = \bigcup_{n\in\mathbb{N}, k \leqslant k_n} I_n^k \cup \{ 0 \} \:\:\text{ and }\:\: \bigcup_{n\in\mathbb{N}, k \leqslant k_n} I_n^k \subseteq [0,1]. $$
	\end{enumerate}
	Let us explain the thinking behind these conditions. The first condition ensures us that the intervals are grouped together in regards to their length, the groups are indexed by the subscript and in each following group the length is much smaller compared to the previous group. The second condition informs us how the intervals are ordered on the line. The third condition assures us that in a particular ``section'' of intervals, the distance between two neighboring intervals is equal to their length. Also, the distance between two sections, i.e. the length of a ``gap'' between two sections, is equal to the length of a gap \textit{inside} the next section. In condition four we take care of the fact that the intervals ``fit into'' a required space and that their boundaries tend to $0$.
	
	Once the preceding construction is finished, we are ready to define the main set of the proof. Put
	$$ X = \bigcup_{n\in\mathbb{N}, k \leqslant k_n} I_n^k \cup \{0\}.$$
	To complete the proof we need to show that $X\notin wIFS^1$ and $X \in \overline{L^1_2}$. We will start with the first part.
	
	Let us assume otherwise, that $X \in wIFS^1$. This implies that there exists a certain number $n$ and such weak contractions $f_1, f_2, \dots, f_n$ that $X$ is the attractor, i.e. $X = f_1[X] \cup \dots \cup f_n[X].$ Since these weak contractions act on intervals, it will be crucial to consider what number of intervals will be present in their images. We claim that the set $f_1[X] \cup \dots \cup f_n[X]$ cannot cover $\bigcup_{k\leq k_n}I_n^k$.
	
	Observe that
	$$ f_1\left[ \bigcup_{m<n \:\land\: k \leqslant k_m} I_m^k \right] $$
	will cover \textit{at most} $k_1 + \dots + k_{n-1} = \sum_{i<n} k_i$ of the intervals $I_n^1, \dots, I_n^{k_n}$. Hence
	$$ f_1\left[ \bigcup_{m<n \:\land\: k \leqslant k_m} I_m^k \right] \cup \dots \cup f_n\left[ \bigcup_{m<n \:\land\: k \leqslant k_m} I_m^k \right] $$
	will cover at most $n\cdot \sum_{i<n} k_i$ of them. Similarly,
	$$ f_1 \left[ I_n^1 \cup \dots \cup I_n^{k_n} \right] $$
	will cover at most \textit{one} interval (due to the fact the gaps are of equal size and the functions are weak contractions) and therefore
	$$ f_1 \left[ I_n^1 \cup \dots \cup I_n^{k_n} \right] \cup \dots \cup f_n \left[ I_n^1 \cup \dots \cup I_n^{k_n} \right] $$
	will cover at most $n$ of them.
	
	Using condition (1) from the construction of $(I_n^k)$, the sets
	$$ \bigcup_{m>n \: \land \: k \leqslant k_m} I_m^k $$
	will not cover a single interval (combined they are shorter), which means that at least one interval was not fully covered. This finishes this part.
	
	Now we will show that $X\in \overline{L^1_2}$.
	
	Fix $\varepsilon > 0$. To finish the proof we will find weak contractions $f_1, f_2$ such that their attractor will be $\varepsilon$-close to $X$.
	
	For convenience we define the operation $\widetilde{\cdot}_\varepsilon$ as follows: $\widetilde{E}_\varepsilon = \bigcup_{x\in E} B(x,\varepsilon)$. Obviously $E\subseteq \widetilde{E}_\varepsilon$. Since $(I_n^k)$ is a sequence of intervals with decreasing lengths, decreasing gaps and approaching 0, for a fixed $\varepsilon$ the set $\widetilde{X}_\varepsilon$ will eventually \textit{swallow} a cofinite collection of intervals tending to 0. More formally, there is $n\in\mathbb{N}$ such that:
	$$ \widetilde{X}_\varepsilon = \bigcup_{m<n,k\leq k_m}\widetilde{(I_m^k)}_\varepsilon\cup\bigcup_{k\leq k_n-1}\widetilde{(I_n^k)}_\varepsilon\cup [0, \alpha), $$
	where $\alpha = \max(I_n^{k_n})+\varepsilon$.
	
	Let us define $f_1$. Start by enforcing the following conditions:
	\begin{itemize}
		\item $ f_1 \left( \max I_1^{1}\right) = \max I_{1}^2 $.
		\item $ f_1 \left( \min I_m^{k_m-1} \right) \in \left( \min I_m^{k_m}, \min I_m^{k_m} + \varepsilon \right) $ for $m\leqslant n$.
		\item $ f_1\left( \min I_m^{k_m} \right) = \min \left( I_{m+1}^1 \right) $ for $m<n$.
		\item $ f_1\left( \max I_m^{k_m} \right) = \max \left( I_{m+1}^1 \right) $ for $m<n$.
	\end{itemize}
	
	We also need to determine what happens in $[ 0,\alpha )$. We define an auxiliary, decreasing sequence $(y_n)$ such that:
	\begin{itemize}
		\item $y_1 = \max{I_n^{k_n}} + \frac{\varepsilon}{2}$.
		\item $ f_1[I_n^{k_n-1}] = \{y_1\} $.
		\item $y_{i+1} =\max\{0,y_i - \frac{\varepsilon}{i+1}\}$.
		\item $ f_1(y_i) = y_{i+1} $.
	\end{itemize}
	These conditions determine the values of $f_1$ on isolated points and in-between them $f_1$ is linear. This concludes the definition of $f_1$.
	
	Note that the sequence $(y_n)$ is constantly equal to zero from some point on, since the series $\sum\frac{\varepsilon}{i+1}$ is divergent. Let $i_0=\min\{i\in\mathbb{N}:y_i=0\}$.
	
	For $f_2$ we require that it is a weak contraction and that the following condition holds:
	$$ f_2\left[ I_1^1 \cup \bigcup_{m\leqslant n, \: k \leqslant k_m } f_1[I_m^k] \cup \{y_1, \dots, y_{i_0}\} \right] = I_1^1, $$
	which means that $f_2$ transfers everything onto $I_1^1$.
	
	We obtain that the set
	$$I_1^1 \cup \bigcup_{m\leqslant n, \: k \leqslant k_m } f_1[I_m^k] \cup \{y_1, \dots, y_{i_0}\}$$
	is an attractor for $\{f_1,f_2\}$ and it is $\varepsilon$-close to $X$.
	
\end{proof}

\newcommand{\nosort}[1]{}

\begin{center}
\flushleft{{\sl Addresses:}} \\
Pawe\l{} Klinga \\
Institute of Mathematics \\
University of Gda\'nsk \\
Wita Stwosza 57 \\
80 -- 952 Gda\'nsk \\
Poland\\
e-mail: pklinga@mat.ug.edu.pl
\end{center}

\begin{center}
\flushleft{{\sl Address:}} \\
Adam Kwela \\
Institute of Mathematics \\
University of Gda\'nsk \\
Wita Stwosza 57 \\
80 -- 952 Gda\'nsk \\
Poland\\
e-mail: akwela@mat.ug.edu.pl
\end{center}

\end{document}